\documentclass{article}

\usepackage{amssymb, amsmath, amsthm, array}

\usepackage[left=1.5in, top=1.3in, right=1.3in, bottom=1.2in]{geometry}

\newtheorem{theorem}{Theorem}[section]

\newtheorem{proposition}[theorem]{Proposition}
\newtheorem{corollary}[theorem]{Corollary}

\begin{document}

\title{\bf An Elementary Approach to Weight Multiplicities in Bivariate Irreducible Representations of Sp(2r)}
\author{Julia Maddox \\
		Department of Mathematics \\
		University of Oklahoma \\
		Norman, Oklahoma}
\date{}

\maketitle

\abstract{By bivariate irreducible representations of ${\rm Sp}(2r)$, we mean irreducible representations with highest weights containing at most two nonzero entries, using the usual identification of dominant weights for complex symplectic Lie algebras and their corresponding Lie groups as $r$-tuples in decreasing non-negative integers. This paper has two aims. The first aim is to provide a formula for the weight mulitplicities of said representations, which is easily computable. The second aim is to present these weight multiplicities using elementary means. The formula for these weight multiplicities is derived using basic multiliear algebra and combinatorial arguments through explicit descriptions of weight vectors.}

\vspace{0.2in}
classification numbers:  05A17, 05E10, 15A69, 17B10, 17B20, 22E60

\vspace{0.2in}
keywords: symplectic Lie algebra; symplectic Lie group; multilinear algebra; weight multiplicity; symmetric tensor; standard representation

\section{Introduction}

Every finite-dimensional representation of a complex semisimple Lie algebra can be completely decomposed into a direct sum of weight spaces.  The weights of a representation lie on a lattice in a Euclidean space whose dimension is equal to the rank of the Lie algebra.  Every irreducible representation has a unique highest weight and a unique weight diagram associated to it.  This unique weight diagram includes specific multiplicities for each weight; the multiplicity of a weight is equal to the dimension of its corresponding weight space.  These multiplicities appear in many contexts and have long been of interest to representation theorists.  Most formulas have been general and have incorporated complicated functions into summations, which are difficult to compute for specific representations and weights, especially in higher rank cases.

In this note, we will focus our efforts specifically on the Lie group ${\rm Sp}(2r)$ and its Lie algebra $\mathfrak{sp}(2r, \mathbb{C})$.  By narrowing our focus to bivariate irreducible representations of ${\rm Sp}(2r)$, we hope to garner more information about the weight vectors of these representations and formulate an expression for the weight multiplicities in more easily computable terms.

By bivariate irreducible representations of ${\rm Sp}(2r)$, we mean irreducible representations with highest weights containing at most two nonzero entries, using the usual identification of dominant weights for complex symplectic Lie algebras and their corresponding Lie groups as $r$-tuples in decreasing non-negative integers. This paper has two aims. The first aim is to provide a formula for the weight mulitplicities of said representations, which is easily computable. The second aim is to present these weight multiplicities using elementary means. The formula for these weight multiplicities is derived using basic multiliear algebra and combinatorial arguments through explicit descriptions of weight vectors.

We first prove a general result, which entails multilinear algebra for symmetric tensors; see Proposition \ref{prop1} and Corollary \ref{cor} from Section 2.  While these are certainly well known to experts, we have included direct proofs for completeness.  Proposition \ref{prop2} (and subsequently Corollary \ref{cor2}) follows from this result together with the explicit determination of certain highest weight vectors occurring in a tensor product of symmetric powers of the standard representation of $\mathfrak{sp}(2r, \mathbb{C})$ and a combinatorial argument.  Corollary \ref{cor2} then shows how an irreducible representation can be expressed as a linear combination of tensor products of symmetric powers of the standard representation.  These results can also be found using Littelmann's paper \cite{l} and Young tableaux or using a formula involving characters from Section 24.2 in \cite{fh}.

In Section 4, we first determine a formula for the weight multiplicities in a tensor product of symmetric powers of the standard representation using combinatorics.  We then use the identity of Section 3 and the weight multiplicities in a tensor product of symmetric powers of the standard representation to create a formula for the weight multiplicities in any bivariate irreducible representation of $\mathfrak{sp}(2r, \mathbb{C})$.

\section{Symmetric tensors} \label{mla}

Let $V$ have the basis $\{v_1, v_2, \ldots, v_k\}$, and let $W$ be a finite-dimensional vector space over the same field and with the basis $\{w_1, w_2, \ldots, w_l\}$.
Using the given bases of $V$ and $W$, the standard basis of ${\rm Sym}^{n}V \otimes {\rm Sym}^{m}W$ is $$\{ (v_{i_1} \cdots v_{i_n}) \otimes (w_{j_1} \cdots w_{j_m}) | i_1 \leq \ldots \leq i_n, j_1 \leq \ldots \leq j_m \}.$$
We may identify these basis elements with pairs of $k$- and $l$-tuples $(a_1, \ldots, a_k) \times (b_1, \ldots, b_l)$ using the following correspondence.  For a particular pure tensor basis element, let $a_i$ equal the number of times $v_i$ appears in that basis element and let $b_j$ equal the number of times $w_j$ appears in that basis element.  The standard basis of ${\rm Sym}^{n}V \otimes {\rm Sym}^{m}W$ is now identified as $$\{ (a_1, \ldots, a_k) \times (b_1, \ldots, b_l) | a_i, b_j \in \mathbb{Z}_{\geq 0}, \displaystyle\sum_{i=1}^k a_i = n, \displaystyle\sum_{j=1}^l b_j = m \}.$$

Let $V^*$ be the dual space of $V$ with dual basis $\{f_1, \ldots, f_k\}$.  Consider the linear map $$\rho: {\rm Sym}^{n-1}V \otimes {\rm Sym}^{m-1}V^* \to {\rm Sym}^{n}V \otimes {\rm Sym}^{m}V^*,$$ for $n,m \geq 1$, such that
$$(\alpha_1 \cdots \alpha_{n-1}) \otimes (\beta_1 \cdots \beta_{m-1}) \longmapsto \displaystyle\sum_{i=1}^k (\alpha_1 \cdots \alpha_{n-1} \cdot v_i) \otimes (\beta_1 \cdots \beta_{m-1} \cdot f_i).$$
This is the map defined as multiplication by the element $\displaystyle\sum_{i=1}^k v_i \otimes f_i$, which generates the trivial representation in $V \otimes V^*$, and $\rho$ is an injective intertwining map.

Dualizing the map $\rho$ (with $n$ and $m$ interchanged) produces the linear map $$\rho^*: {\rm Sym}^nV \otimes {\rm Sym}^mV^* \to {\rm Sym}^{n-1}V \otimes {\rm Sym}^{m-1}V^*$$ with the property
$$(\alpha_1 \cdots \alpha_n) \otimes (\beta_1 \cdots \beta_m) \longmapsto \displaystyle\sum_{i=1}^n \displaystyle\sum_{j=1}^m \beta_j(\alpha_i)(\alpha_1 \cdots \hat{\alpha_i} \cdots \alpha_n) \otimes (\beta_1 \cdots \hat{\beta_j} \cdots \beta_m),$$ 
and $\rho^*$ is a surjective intertwining map.

Since $\rho$ is an injective intertwining map, we recognize the following result.

\begin{proposition} \label{prop1}
Let $V$ be a finite-dimensional representation of a Lie algebra.  
Then there exists an invariant subspace 
\begin{center}
${\rm Sym}^{n-1}V \otimes {\rm Sym}^{m-1}V^* \subset {\rm Sym}^{n}V \otimes {\rm Sym}^{m}V^*$ for all integers $n,m\geq1$.
\end{center}
\end{proposition}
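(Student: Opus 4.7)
The plan is to produce the desired invariant subspace as the image of the map $\rho$ already constructed immediately before the proposition. Since $\rho$ maps ${\rm Sym}^{n-1}V \otimes {\rm Sym}^{m-1}V^*$ into ${\rm Sym}^{n}V \otimes {\rm Sym}^{m}V^*$, it suffices to verify that (i) $\rho$ intertwines the Lie algebra action, so that its image is stable under $\mathfrak{g}$, and (ii) $\rho$ is injective, so that its image is genuinely isomorphic to ${\rm Sym}^{n-1}V \otimes {\rm Sym}^{m-1}V^*$ as a representation and hence qualifies as the claimed invariant copy. Both properties are asserted in passing when $\rho$ is introduced, so the task is really to substantiate them.

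For the intertwining property, the key observation is that the element $\theta := \sum_{i=1}^{k} v_i \otimes f_i \in V \otimes V^*$ is the canonical element corresponding to $\mathrm{id}_V$ under $V \otimes V^* \cong \mathrm{End}(V)$, and is therefore invariant under any Lie algebra action on $V$. Concretely, writing $X v_i = \sum_j X_{ji} v_j$ for $X \in \mathfrak{g}$ and using the dual action $(Xf)(v) = -f(Xv)$, a direct computation gives $X \cdot \theta = \sum_{i,j} X_{ji}\, v_j \otimes f_i - \sum_{i,j} X_{ji}\, v_j \otimes f_i = 0$. Since $\rho$ is symmetric multiplication by the invariant element $\theta$, and symmetric multiplication by an invariant is automatically $\mathfrak{g}$-equivariant (by the Leibniz rule for the action on a symmetric power), $\rho$ is an intertwiner.

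For injectivity, I would identify ${\rm Sym}^{\bullet}V \otimes {\rm Sym}^{\bullet}V^*$ with the polynomial algebra $\complex[v_1, \ldots, v_k, f_1, \ldots, f_k]$, under which $\rho$ becomes multiplication by the nonzero polynomial $\theta = \sum_i v_i f_i$. Because a polynomial ring is an integral domain, multiplication by $\theta$ is injective on the whole algebra and in particular on each bidegree component. Consequently $\rho({\rm Sym}^{n-1}V \otimes {\rm Sym}^{m-1}V^*)$ is a subspace of ${\rm Sym}^{n}V \otimes {\rm Sym}^{m}V^*$ of the correct dimension, invariant under $\mathfrak{g}$, and isomorphic to ${\rm Sym}^{n-1}V \otimes {\rm Sym}^{m-1}V^*$ as required. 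The main obstacle is really just recognizing this symmetric-algebra/polynomial-algebra identification; once that is in place injectivity is automatic, although one could alternatively argue combinatorially by showing that the leading basis element (in, say, the lexicographic order on the tuples $(a_1,\ldots,a_k) \times (b_1,\ldots,b_k)$ used earlier in the section) of $\rho$ applied to a basis vector is distinct for distinct inputs.
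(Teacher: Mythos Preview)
Your proposal is correct and follows exactly the paper's approach: the paper deduces the proposition immediately from the assertion that $\rho$ is an injective intertwining map, and you simply supply the verifications of those two properties (invariance of $\theta=\sum_i v_i\otimes f_i$ via the identification with $\mathrm{id}_V$, and injectivity via the polynomial-ring/integral-domain argument) that the paper leaves implicit. There is nothing to add or correct.
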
  

\section{The case of $\mathfrak{sp}(2r, \mathbb{C})$} \label{sp2r}

We will apply the above result of Proposition \ref{prop1} to representations of the Lie algebra $\mathfrak{sp}(2r, \mathbb{C})$, $r \geq 2$, where

$\mathfrak{sp}(2r, \mathbb{C}) = \{ A \in \mathfrak{gl}(2r, \mathbb{C}) \mid A^tJ + JA = 0 \}$ and $J = \left[\begin{smallmatrix}
 0 & J_r \\
 -J_r & 0 \\
\end{smallmatrix}\right]$, in which $J_r$ is defined to be the $r \times r$ anti-diagonal matrix with ones along the anti-diagonal.

\noindent Evidently, $\mathfrak{sp}(2r, \mathbb{C})$ is $(2r^2+r$)-dimensional and has the following basis,

\begin{align*}
\{H_k\} &= \{E_{kk}-E_{2r+1-k,2r+1-k}|k=1, \ldots, r\},
\\
\{ X_{\alpha} \} &= \{ E_{ij} - E_{2r+1-j,2r+1-i}, E_{i,2r+1-j} + E_{j, 2r+1-i} | i < j\} \cup \{E_{i,2r+1-i} | i = 1, \ldots, r \},
\\
\{Y_{\alpha}\} &= \{X_{\alpha}^t\}.
\end{align*}

In this basis, the Cartan subalgebra is $\mathfrak{h} = \langle H_1, \ldots, H_r \rangle$, and for each root $\alpha$, $$\mathfrak{s}^{\alpha}=\text{span}\{X_{\alpha}, Y_{\alpha}, H_{\alpha} = [X_\alpha, Y_\alpha]\} \cong \mathfrak{sl}(2, \mathbb{C}).$$

Any weight $(x_1, x_2, \ldots,x_r)$ can be thought of as the eigenvalues associated to $H_1$ through $H_r$, respectively, for the corresponding weight vector.  The positive root associated to the root vector $E_{i,2r+1-i}$ is $(0, \ldots, 2, \ldots, 0)$ (the 2 is in the $i$th position), $i = 1, \ldots, r$.  The positive root associated to the root vector $E_{ij}-E_{2r+1-j,2r+1-i}$ is $(0, \ldots, 1, \ldots, -1, \ldots, 0)$ (with 1 in the $i$th position and -1 in the $j$th position), $1 \leq i < j \leq r$.  The positive root associated to the root vector $E_{i,2r+1-j}+E_{j,2r+1-i}$ is $(0, \ldots, 1, \ldots, 1, \ldots, 0)$ (with 1 in both the $i$th and $j$th position), $1 \leq i < j \leq r$.  Therefore the weight diagram of any representation will be symmetric about every coordinate plane $x_i = 0$, every plane of the form $x_i = x_j$, and every plane of the form $x_i = -x_j$ within $\mathbb{R}^r$.  As a result, any weight in the weight diagram of a representation, $(x_1, \ldots, x_r)$ will have the same multiplicity as the weight $(|x_{\sigma^{-1}(1)}|, \ldots, |x_{\sigma^{-1}(r)}|)$, where $\sigma \in S_r$.  The dominant Weyl chamber is $\{(x_1, \ldots, x_r) \in \mathbb{Z}^m :x_1 \geq x_2 \geq \ldots \geq x_r \geq 0\}$.  Let $V{(x_1, \ldots, x_r)}$ be the irreducible representation with highest weight $(x_1, \ldots, x_r)$.

The standard representation of $\mathfrak{sp}(2r, \mathbb{C})$ is $V{(1, 0, \ldots, 0)}$.  It has the standard basis of weight vectors, $\{e_1, \ldots, e_{2r} \}$, and is isomporphic to its dual representation with corresponding basis $\{f_1, \ldots, f_{2r} \}$.  These representations are isomorphic via $f_i \mapsto e_{2r+1-i}$ for $r+1 \leq i \leq 2r$ and $f_j \mapsto -e_{2r+1-j}$ for $1 \leq j \leq r$.

It can be easily shown that $V{(n,0, \ldots, 0)} = {\rm Sym}^nV{(1,0, \ldots, 0)}$.  The weight diagram for $V{(n,0, \ldots, 0)}$ is a series of nested "diamonds" with leading weights $(n-2k,0, \ldots, 0)$ and with multiplicities 
$\begin{pmatrix}
k + r-1
\\
r-1
\end{pmatrix}$ along the diamonds, $0 \leq k \leq \lfloor \frac{n}{2} \rfloor$.

\begin{proposition} \label{prop2}
For $\mathfrak{sp}(2r, \mathbb{C})$ and its standard representation $V=V{(1,0, \ldots, 0)}$, 
$${\rm Sym}^{n}V \otimes {\rm Sym}^{m}V = ({\rm Sym}^{n-1}V \otimes {\rm Sym}^{m-1}V) \oplus \bigoplus_{p=0}^{m} V{(n+m-p,p, 0, \ldots, 0)}$$ for integers $n \geq m \geq 1$.
\end{proposition}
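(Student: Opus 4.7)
Because the standard representation $V$ of $\mathfrak{sp}(2r, \mathbb{C})$ is self-dual, Proposition~\ref{prop1} supplies an equivariant embedding $\rho: {\rm Sym}^{n-1} V \otimes {\rm Sym}^{m-1} V \hookrightarrow {\rm Sym}^n V \otimes {\rm Sym}^m V$. By complete reducibility this subrepresentation admits an invariant complement, and since the partner map $\rho^*$ constructed in the excerpt is surjective, that complement has dimension $\dim\ker\rho^*$ and can in fact be taken to be $\ker\rho^*$ itself. The task thus reduces to identifying $\ker\rho^*$ with $\bigoplus_{p=0}^m V(n+m-p, p, 0, \ldots, 0)$.

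For each $p = 0, 1, \ldots, m$, I would construct an explicit highest weight vector
$$v_p \;=\; \sum_{k=0}^{p} (-1)^k \binom{p}{k}\, \bigl(e_1^{\,n-p+k} e_2^{\,p-k}\bigr) \otimes \bigl(e_1^{\,m-k} e_2^{\,k}\bigr)$$
of weight $(n+m-p, p, 0, \ldots, 0)$. Since only $e_1$ and $e_2$ appear, every positive root vector except the one associated to $\epsilon_1 - \epsilon_2$ acts trivially on $v_p$, and the remaining Leibniz action telescopes to zero by the identity $(p-k)\binom{p}{k} = (k+1)\binom{p}{k+1}$. Moreover, the contraction $\rho^*$ pairs symplectic partners via $\langle e_i, e_j\rangle = \pm\delta_{i+j, 2r+1}$, and the partners of $e_1$ and $e_2$ are $e_{2r}$ and $e_{2r-1}$, which do not appear in $v_p$; hence $\rho^*(v_p) = 0$, placing $v_p$ in $\ker\rho^*$. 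The cyclic submodule it generates is therefore an irreducible copy of $V(n+m-p, p, 0, \ldots, 0)$, and the $m+1$ copies, having distinct highest weights, sum directly.

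To complete the identification, one must show that these irreducibles exhaust $\ker\rho^*$. Since dimensions agree on both sides if and only if
$$\dim({\rm Sym}^n V \otimes {\rm Sym}^m V) \,-\, \dim({\rm Sym}^{n-1} V \otimes {\rm Sym}^{m-1} V) \;=\; \sum_{p=0}^m \dim V(n+m-p, p, 0, \ldots, 0),$$
the proof reduces to this combinatorial identity between a product of binomial coefficients on the left and a sum of Weyl dimensions on the right. A route more in the elementary spirit of the paper is to count highest weight vectors directly: annihilation by the simple positive root vectors, applied to weight vectors supported on $e_1, \ldots, e_r$ in the "top layer" (where the sum of absolute coordinates equals $n+m$), organizes into a cascade of linear constraints. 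I would verify case by case that the only solutions are (i) no nonzero solution when the weight has three or more nonzero entries, (ii) the one-parameter family spanned by $v_p$ when the weight is $(n+m-p, p, 0, \ldots, 0)$ with $0 \leq p \leq m$, and (iii) no nonzero solution when $p > m$, because the $X_{1,2}$-recursion forces a boundary coefficient to vanish and propagates the collapse to all other coefficients.

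The main obstacle is precisely this completeness step. For the dimension route, it requires a non-obvious telescoping or hypergeometric manipulation of the Weyl dimensions of $V(n+m-p, p, 0, \ldots, 0)$ in type $C_r$. For the direct-count route, the hardest sub-case is ruling out highest weight vectors with three or more nonzero entries: the bivariate analysis uses only the $X_{1,2}$ action, but once a basis vector $e_3$ enters the joint conditions imposed by $X_{1,2}$ and $X_{2,3}$ interact in a nontrivial way, and one must bookkeep candidate coefficients by the multiexponents $(n_1, n_2, n_3; m_1, m_2, m_3)$ and show that the resulting homogeneous linear system admits only the trivial solution.
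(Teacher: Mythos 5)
Your first half coincides, almost symbol for symbol, with the paper's own argument: your $v_p$ is exactly the paper's vector $\sum_{i=0}^{p}(-1)^i\binom{p}{i}\,(n-p+i,\,p-i,\,0,\ldots,0)\times(0,\ldots,0,\,i,\,m-i)$ in ${\rm Sym}^nV\otimes{\rm Sym}^mV^*$, transported along the self-duality $f_{2r}\mapsto e_1$, $f_{2r-1}\mapsto e_2$; the observations that only the root vector for $\epsilon_1-\epsilon_2$ acts nontrivially, that the Leibniz action telescopes via $(p-k)\binom{p}{k}=(k+1)\binom{p}{k+1}$, and that $\rho^*(v_p)=0$ because the symplectic partners of $e_1,e_2$ are absent, are all the paper's steps. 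But the proposal stops being a proof exactly where you say it does: the exhaustion of $\ker\rho^*$ is left as a plan (``I would verify case by case''), with neither the dimension identity of route (a) nor the linear system of route (b) actually analyzed. That is a genuine gap, and moreover route (b) as you formulate it would fail: annihilation by the positive root vectors alone cannot single out the $v_p$, since ${\rm Sym}^nV\otimes{\rm Sym}^mV$ contains many further highest weight vectors --- for instance the $\rho$-images of the analogous vectors of ${\rm Sym}^{n-1}V\otimes{\rm Sym}^{m-1}V$ --- whose weights lie in lower layers, so your implicit restriction to weight vectors in the top layer supported on $e_1,\ldots,e_r$ begs the question. The condition $\rho^*(v)=0$ must enter the linear algebra itself.

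The paper closes the gap differently from both of your routes, and the mechanism is worth internalizing. Take an arbitrary highest weight vector $v\in\ker\rho^*$ and, among the basis vectors $(a_1,\ldots,a_{2r})\times(b_1,\ldots,b_{2r})$ in its support, one with $a_1$ maximal. The $X_\alpha$-annihilation conditions first force this term to the shape $(n,0,\ldots,0)\times(b_1,0,\ldots,0,b_{2r-1},b_{2r})$; then --- this is the step your sketch is missing --- the kernel condition is used quantitatively: the same $X_\alpha$-conditions determine the coefficients in $v$ of every other basis vector that can contribute the monomial $(n-1,0,\ldots,0)\times(b_1-1,0,\ldots,0,b_{2r-1},b_{2r})$ to $\rho^*(v)$, and summing these contributions in $\rho^*(v)=0$ yields $b_1\,(n+2r-1+b_{2r-1}+b_{2r})=0$, forcing $b_1=0$. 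Hence the weight of $v$ is automatically $(n+m-j,j,0,\ldots,0)$ with $0\le j\le m$, and your hardest sub-case (weights with three or more nonzero entries, or $p>m$) never needs a separate analysis. Finally, in place of a Weyl-dimension telescope, an elementary count shows this weight has multiplicity exactly $j+1$ in ${\rm Sym}^nV\otimes{\rm Sym}^mV$, which is precisely accounted for by the independent vectors $Y_\alpha^{\,j-i}v_i$, $0\le i\le j$, with $Y_\alpha=(E_{12}-E_{2r-1,2r})^t$; therefore $v$ is a scalar multiple of $v_j$ and $\ker\rho^*=\bigoplus_{p=0}^m V(n+m-p,p,0,\ldots,0)$. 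If you want to complete your write-up along your direct-count route, fold $\rho^*(v)=0$ into the cascade of constraints in this way; the Weyl-dimension identity of your route (a) would also suffice, but you have not established it, and it is harder than the counting argument the paper actually uses.
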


\begin{proof}
Given $n \geq m$ and using the previously described notation, we define for all integers $p$ such that $0 \leq p \leq m$ the following vector in ${\rm Sym}^{n}V \otimes {\rm Sym}^{m}V^*$,
$$v_p = \displaystyle\sum_{i=0}^p \begin{pmatrix} p \\ i \end{pmatrix} (-1)^i (n-p+i, p-i,0, \ldots, 0) \times (0,\ldots, 0,i, m-i).$$
These vectors are in the kernel of the map $\rho^*$ defined in Section \ref{mla} because each $(n-p+i, p-i,0,\ldots,0) \times (0,\ldots,0,i, m-i) \mapsto 0 + \ldots + 0 = 0$.  Also, this vector is a highest weight vector with weight $(n+m-p,p, 0, \ldots, 0)$.

To see $v_p$ is a highest weight vector, it is enough to show that it is in the kernel of $X_{\alpha}$ for any $\alpha$.  

First, the only relevant calculations are $X_{\alpha}e_1$, $X_{\alpha}e_2$, $X_{\alpha}f_{2r-1}$, and $X_{\alpha}f_{2r}$.  These will all be equal to zero except for $X_{\alpha} = E_{12}-E_{2r-1,2r}$, and it is a straightforward calculation to show $(E_{12}-E_{2r-1,2r})v_p = 0$.

For each of the highest weight vectors, $v_p$, with weight $(n+m-p,p, 0, \ldots, 0)$ and in the kernel of $\rho^*$, there is an irreducible representation $V{(n+m-p,p, 0, \ldots, 0)}$ contained in the kernel.  Since all of the weights $\{(n+m-p,p, 0, \ldots, 0):0 \leq p \leq m\}$ are distinct, $$\bigoplus_{p=0}^{m} V{(n+m-p,p, 0, \ldots, 0)} \subset \ker(\rho^*).$$

It follows from semisimplicity and the surjectivity of $\rho^*$ that 
\begin{align*}
& ({\rm Sym}^{n-1}V \otimes {\rm Sym}^{m-1}V^*) \oplus \bigoplus_{p=0}^{m} V{(n+m-p,p)} 
\\
& \qquad \subset ({\rm Sym}^{n-1}V \otimes {\rm Sym}^{m-1}V^*) \oplus \ker(\rho^*)
\\
& \qquad = {\rm Sym}^{n}V \otimes {\rm Sym}^{m}V^*
\\
& \qquad = {\rm Sym}^{n}V \otimes {\rm Sym}^{m}V
\end{align*}
for $n \geq m \geq 1$.
Note that $V^*$ can be replaced by $V$ since this representation is self-dual.

We now wish to prove this inclusion is an equality.  Let $v$ be some highest weight vector in the kernel of $\rho^*$.  We will show $v$ is a scalar multiple of some $v_p$, and thus $\ker(\rho^*) = \bigoplus_{p=0}^{m} V{(n+m-p,p, 0, \ldots, 0)}$.

The standard basis of ${\rm Sym}^{n}V \otimes {\rm Sym}^{m}V^*$ is a basis of weight vectors.  As a weight vector, $v$ exists in some weight space of ${\rm Sym}^{n}V \otimes {\rm Sym}^{m}V^*$ and must be equal to a linear combination of basis vectors of the form $(a_1, \ldots, a_{2r}) \times (b_1, \ldots, b_{2r})$ such that each of these basis vectors have the same weight.  Since $v$ is a highest weight vector, $X_{\alpha}.v = 0$ for $X_{\alpha}$.  We will now use the following calculations.

For the first set of $X_{\alpha}$, with $1 \leq i < j \leq r$,
\begin{align*}
&(E_{ij} - E_{2r+1-j,2r+1-i}).(a_1, \ldots, a_{2r}) \times (b_1, \ldots, b_{2r})
\\
&= a_j(a_1, \ldots, a_{i}+1, \ldots, a_{j}-1, \ldots, a_{2r}) \times (b_1, \ldots, b_{2r})
\\
&\,\, - a_{2r+1-i}(a_1, \ldots, a_{2r+1-j}+1, \ldots, a_{2r+1-i}-1, \ldots, a_{2r}) \times (b_1, \ldots, b_{2r})
\\
&\,\, - b_i(a_1, \ldots, a_{2r}) \times (b_1, \ldots, b_i -1, \ldots, b_j +1, \ldots, b_{2r})
\\
&\,\, + b_{2r+1-j}(a_1, \ldots, a_{2r}) \times (b_1, \ldots, b_{2r+1-j}-1, \ldots, b_{2r+1-i}+1, \ldots, b_{2r}).
\end{align*}

For the second set of $X_{\alpha}$, with $1 \leq i < j \leq r$,
\begin{align*}
&(E_{i,2r+1-j} + E_{j,2r+1-i}).(a_1, \ldots, a_{2r}) \times (b_1, \ldots, b_{2r})
\\
&= a_{2r+1-j}(a_1, \ldots, a_i + 1, \ldots, a_{2r+1-j}-1, \ldots, a_{2r}) \times (b_1, \ldots, b_{2r})
\\
&\,\,+a_{2r+1-i}(a_1, \ldots, a_j+1, \ldots, a_{2r+1-i}-1, \ldots, a_{2r}) \times (b_1, \ldots, b_{2r})
\\
&\,\,-b_i(a_1, \ldots, a_{2r}) \times (b_1, \ldots, b_i-1, \ldots, b_{2r+1-j}+1, \ldots, b_{2r})
\\
&\,\,-b_j(a_1, \ldots, a_{2r}) \times (b_1, \ldots, b_j-1, \ldots, b_{2r+1-i}+1, \ldots, b_{2r}).
\end{align*}

For the third set of $X_{\alpha}$, with $i = 1, \ldots, r$,
\begin{align*}
&(E_{i,2r+1-i}).(a_1, \ldots, a_{2r}) \times (b_1, \ldots, b_{2r})
\\
&= a_{2r+1-i}(a_1, \ldots, a_i+1, \ldots, a_{2r+1-i}-1, \ldots, a_{2r}) \times (b_1, \ldots, b_{2r})
\\
&\,\,-b_i(a_1, \ldots, a_{2r}) \times (b_1, \ldots, b_i -1, \ldots, b_{2r+1-i}+1, \ldots, b_{2r}).
\end{align*}

Note that there is a slight abuse of notation here as in the final identity, if $a_{2r+1-i} = 0$, there is simply no first term after the equals sign and $(a_1, \ldots, a_i+1, \ldots, a_{2r+1-i}-1, \ldots, a_{2r}) \times (b_1, \ldots, b_{2r})$ is not a vector.

Out of all the basis vectors $(a_1, \ldots, a_{2r}) \times (b_1, \ldots, a_{2r})$ with nonzero coefficients in the linear combination of $v$, let $(a_1, \ldots, a_{2r}) \times (b_1, \ldots, b_{2r})$ be a specific basis vector such that $a_1$ is maximal.  Without loss of generality, assume the coefficient of this basis vector in the linear combination of $v$ is equal to $1$.

In the expansion of $(E_{1,j}-E_{2r+1-j,2r}).v$ for $j=2, \ldots, r$, if $a_j \neq 0$ the vector $a_j(a_1+1, \ldots, a_j - 1, \ldots, a_{2r}) \times (b_1, \ldots, b_{2r})$ will occur when $(E_{1,j}-E_{2r+1-j,2r})$ is applied to $(a_1, \ldots, a_{2r}) \times (b_1, \ldots, b_{2r})$.  Since $a_1$ was maximal, no other basis vector with a nonzero coefficient in the linear combination of $v$ will produce the basis element $(a_1+1, \ldots, a_j - 1, \ldots, a_{2r}) \times (b_1, \ldots, b_{2r})$ in the image when $(E_{1,j}-E_{2r+1-j,2r})$ is applied to $v$, but if this is a defined basis element, its coefficient in $(E_{1,j}-E_{2r+1-j,2r}).v = 0$ must equal zero.  This is a contradiction.  Therefore, $a_j = 0$ for $j=2, \ldots, r$.  Similarly, $(E_{1,2r+1-j}+E_{j,2r}).v = 0$ implies $a_{2r+1-j} = 0$ for $j=2, \ldots, r$ or $a_j = 0$ for $j= r+1, \ldots, 2r-1$, and $(E_{1,2r}).v = 0$ implies $a_{2r}= 0$.  Now, $(a_1, \ldots, a_{2r}) \times (b_1, \ldots, b_{2r}) = (a_1, 0, \ldots, 0) \times (b_1, \ldots, b_{2r})$, and $a_1$ must be equal to $n$, which means $a_1 = n \geq 1$.  

Consider $(E_{i,2r+1-i}).v = 0$ for $i=2, \ldots, r$.  If $b_i \neq 0$, then $(E_{i,2r+1-i}).(a_1, 0, \ldots, 0) \times (b_1, \ldots, b_{2r}) = -b_i(a_1, 0, \ldots, 0)\times(b_1, \ldots, b_i-1, \ldots, b_{2r+1-i}+1, \ldots, b_{2r})$ and the maximality of $a_1$ again implies that there will be no other vector to cancel with this resultant vector in the expansion of $(E_{i,2r+1-i}).v = 0$, which is a contradiction, and thus $b_i = 0$ for $i=2, \ldots, r$.  Now, $(a_1, \ldots, a_{2r}) \times (b_1, \ldots, b_{2r}) = (a_1, 0, \ldots, 0) \times (b_1, 0, \ldots, 0, b_{r+1}, \ldots, b_{2r})$.

Consider $(E_{ij}-E_{2r+1-j,2r+1-i}).v = 0$ for $2 \leq i < j \leq r$.  If $b_{2r+1-j} \neq 0$, then \begin{align*}
&(E_{ij}-E_{2r+1-j,2r+1-i}).(a_1, 0, \ldots, 0) \times (b_1, 0, \ldots, 0, b_{r+1}, \ldots, b_{2r}) 
\\
&\,\, = b_{2r+1-j}(a_1, 0, \ldots, 0) \times (0, \ldots, 0, b_{r+1}, \ldots, b_{2r+1-j}-1, \ldots, b_{2r+1-i}+1, \ldots, b_{2r})
\end{align*}
and $(a_1, 0, \ldots, 0) \times (b_1, 0, \ldots, 0, b_{r+1}, \ldots, b_{2r})$ is the only basis vector to produce this resultant basis vector in the expansion of $(E_{ij}-E_{2r+1-j,2r+1-i}).v = 0$.  Since the coefficient of this basis vector must be equal to zero, there is a contradiction, and thus $b_{2r+1-j} = 0$ for $j = 3, \ldots, 2r$.  Now, $(a_1, \ldots, a_{2r}) \times (b_1, \ldots, b_{2r}) = (a_1, 0, \ldots, 0) \times (b_1, 0, \ldots, 0, b_{2r-1}, b_{2r})$.  Note that whenever writing $v$ as a linear combination of standard basis vectors, any basis vector with a nonzero coefficient and maximal $a_1$ must have this form.

Assume $b_1 \neq 0$.  Consider $(E_{1,j}-E_{2r+1-j,2r}).v$ for $j=2, \ldots, r$.  The only basis vectors to produce $(a_1, 0, \ldots, 0) \times (b_1-1, 0, \ldots, 1, \ldots, 0, b_{2r-1}, b_{2r})$ (with $1$ in the $j$th position in the second $2r$-tuple) in this expansion are $(a_1, 0, \ldots, 0) \times (b_1, 0, \ldots, 0, b_{2r-1}, b_{2r})$ and $(a_1-1, 0, \ldots, 1, \ldots, 0) \times (b_1-1, 0, \ldots, 1, \ldots, 0, b_{2r-1}, b_{2r})$ (with $1$ in the $j$th position in both $2r$-tuples).  $(E_{1,j}-E_{2r+1-j,2r}).v=0$ necessitates that the coefficient of $(a_1-1, 0, \ldots, 1, \ldots, 0) \times (b_1-1, 0, \ldots, 1, \ldots, 0, b_{2r-1}, b_{2r})$ (with $1$ in the $j$th position in both $2r$-tuples) in the linear combination of $v$ must be equal to $b_1$.  Similarly, $(E_{1,2r+1-j}+E_{j,2r}).v = 0$ for $j=3, \ldots, r$ implies the coefficient of $(a_1-1, 0, \ldots, 1, \ldots, 0) \times (b_1 - 1, 0, \ldots, 1, \ldots, 0, b_{2r-1}, b_{2r})$ (with $1$ in the $2r+1-j$th position in both $2r$-tuples) in the linear combination of $v$ must be equal to $b_1$.  $(E_{1,2r+1-j}+E_{j,2r}).v = 0$ implies the coefficient of $(a_1-1, 0, \ldots, 0, 1, 0) \times (b_1 - 1, 0, \ldots, 0, b_{2r-1}+1, b_{2r})$ in the linear combination of $v$ must be equal to $b_1$.  $(E_{1,2r}).v = 0$ implies the coefficient of $(a_1-1, 0, \ldots, 0, 1) \times (b_1 - 1, 0, \ldots, 0, b_{2r-1}, b_{2r}+1)$ in the linear combination of $v$ must be equal to $b_1$.

Since $v$ is in $\ker(\rho^*)$, $\rho^*(v) = 0$, and this implies either $b_1 = 0$ and $$\rho^*((a_1, 0, \ldots, 0) \times (b_1, 0, \ldots,0,b_{2r-1},b_{2r})) = 0$$ or 
\begin{align*}
&\rho^*((a_1, 0, \ldots, 0) \times (b_1, 0, \ldots,0,b_{2r-1},b_{2r})) \\
&\qquad = a_1b_1(a_1-1, 0,\ldots, 0) \times (b_1-1, 0, \ldots, 0,b_{2r-1}, b_{2r})
\end{align*}
will cancel with the image of other basis vectors in the expansion of $\rho^*(v)$.  Consider the following identity.
\begin{align*}
&\rho^*((a_1, \ldots, a_{2r}) \times (b_1, \ldots, b_{2r}))
\\
&=\displaystyle\sum_{i=1}^{2r} a_ib_i(a_1, \ldots, a_i-1, \ldots, a_{2r})\times(b_1, \ldots, b_i-1, \ldots, b_{2r}),
\end{align*}
with the same slight abuse of notation as before.

The only other vectors, which will produce $(a_1-1, 0, \ldots, 0) \times (b_1-1, 0, \ldots, 0, b_{2r-1}, b_{2r})$ in the image of $\rho^*$, are $(a_1-1, 0, \ldots, 1, \ldots, 0) \times (b_1 - 1, 0, \ldots, 1, \ldots, 0, b_{2r-1}, b_{2r})$ (with $1$ in the $i$th position in both $2r$-tuples) for $i=2, \ldots, 2r-2$, $(a_1-1, 0, \ldots, 0, 1, 0) \times (b_1 - 1, 0, \ldots, 0, b_{2r-1}+1, b_{2r})$, and $(a_1-1, 0, \ldots, 0, 1) \times (b_1 - 1, 0, \ldots, 0, b_{2r-1}, b_{2r}+1)$.  Provided $b_1 \neq 0$, we already know what the coefficients of these vectors must be in $v$.

In the image of $\rho^*$, the coefficient of $(a_1-1, 0, \ldots, 0) \times (b_1-1, 0, \ldots, 0, b_{2r-1}, b_{2r})$ is equal to 
\begin{align*}
&a_1b_1  + b_1(1) + \ldots + b_1(1) + b_1(b_{2r-1}+1) + b_1(b_{2r}+1)
\\
&=b_1(a_1 + 2r-1 + b_{2r-1} + b_{2r})
\\
&=0.
\end{align*}
If $b_1 \neq 0$, $a_1 + 2r-1 + b_{2r-1} + b_{2r} = 0$, but this is a contradiction because $a_1$, $b_{2r-1}$, and $b_{2r}$ are all non-negative integers.  Therefore $b_1 = 0$.

When $v$ is written as a linear combination of standard basis vectors, the weight vector $(a_1, 0, \ldots, 0) \times (0, \ldots, 0, b_{2r-1}, b_{2r})$ has a nonzero coefficient.  Therefore $v$ has the same weight as this vector.  Another way to write $(a_1, 0, \ldots, 0) \times (0, \ldots, 0, b_{2r-1}, b_{2r})$ is as $(n, 0, \ldots, 0) \times (0, \ldots, 0, j, m-j)$ for some integer $j$ such that $0 \leq j \leq m$.  

The multiplicity of the weight $(n+m-j, j, 0, \ldots, 0)$ in ${\rm Sym}^{n}V \otimes {\rm Sym}^{m}V^* = {\rm Sym}^{n}V \otimes {\rm Sym}^{m}V$ is equal to the number of integer solutions to the equation $x_1 + x_2 = m$ such that $0 \leq x_1 \leq n+m-j$ and $0 \leq x_2 \leq j$.  (See the next section for an explanation of this.)  The only such solutions are $(m,0), \ldots, (m-j,j)$.  Therefore the multiplicity is $j+1$.

The weight space for $(n+m-j,j, 0, \ldots, 0)$ is spanned by the linearly independent vectors $Y_{\alpha}^jv_0, Y_{\alpha}^{j-1}v_1, \ldots, Y_{\alpha}^{0}v_j$ for $Y_{\alpha} = E_{12}^t - E_{2r-1,2r}^t$.  This completely accounts for the multiplicity of the weight $(n+m-j,j, 0, \ldots, 0)$.  Therefore $v$ must be a scalar multiple of $v_j$.
\end{proof}

All of the highest weight vectors in ${\rm Sym}^nV \otimes {\rm Sym}^mV$, $V=V(1, 0, \ldots, 0)$, can be determined using the proof of Proposition \ref{prop2}, the map $\rho$ from Section \ref{mla}, and the isomorphism between the standard representation and its dual.

In \cite{l}, Littelmann provides a generalization of the Littlewood-Richardson rule in the cases of all simple, simply connected algebraic groups of type $A_m$, $B_m$, $C_m$, $D_m$, $G_2$, $E_6$, and partial results for $F_4$, $E_7$, and $E_8$.  This generalization provides an algorithm for decomposing tensor products of irreducible representations using Young tableaux and can be utilized to produce the result of Proposition \ref{prop2}.

\begin{corollary} \label{cor2}
Let $V = V(1, 0, \ldots, 0)$.  For integers $n \geq m = 1$, 
\begin{align*} {\rm Sym}^n V \otimes V = {\rm Sym}^{n+1}V \oplus V{(n,1, 0, \ldots, 0)} \oplus {\rm Sym}^{n-1}V
\end{align*}
For $n \geq m \geq 2$, 
\begin{align*} 
& ({\rm Sym}^nV \otimes {\rm Sym}^mV) \oplus ({\rm Sym}^{n}V \otimes {\rm Sym}^{m-2}V)
\\
& \hspace{0.3in} = ({\rm Sym}^{n+1}V \otimes {\rm Sym}^{m-1}V) \oplus V{(n,m, 0, \ldots, 0)} \oplus ({\rm Sym}^{n-1}V \otimes {\rm Sym}^{m-1}V)
\end{align*}
\end{corollary}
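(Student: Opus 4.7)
The plan is to derive both decompositions as simple consequences of Proposition \ref{prop2}, essentially by applying it once for the $m=1$ case and twice (then rearranging) for the $m \geq 2$ case. No new representation-theoretic machinery should be needed; the work is purely in bookkeeping the summands.

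For the case $n \geq m = 1$, I would substitute $m=1$ directly into Proposition \ref{prop2}. This gives
\[
{\rm Sym}^n V \otimes V = ({\rm Sym}^{n-1}V \otimes {\rm Sym}^0 V) \oplus V(n+1, 0, \ldots, 0) \oplus V(n,1,0,\ldots,0).
\]
Since ${\rm Sym}^0 V$ is the trivial one-dimensional representation and $V(n+1, 0, \ldots, 0) = {\rm Sym}^{n+1}V$ by the identification noted in Section \ref{sp2r}, the first formula follows immediately.

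For $n \geq m \geq 2$, the idea is to apply Proposition \ref{prop2} in two places and take the difference of the resulting sums of irreducibles. Applying the proposition to ${\rm Sym}^{n}V \otimes {\rm Sym}^{m}V$ gives
\[
{\rm Sym}^{n}V \otimes {\rm Sym}^{m}V = ({\rm Sym}^{n-1}V \otimes {\rm Sym}^{m-1}V) \oplus V(n,m,0,\ldots,0) \oplus \bigoplus_{p=0}^{m-1} V(n+m-p,p,0,\ldots,0),
\]
where I have split off the $p=m$ summand. Applying the proposition to ${\rm Sym}^{n+1}V \otimes {\rm Sym}^{m-1}V$ (legal because $n+1 \geq m-1$ and $m-1 \geq 1$) gives
\[
{\rm Sym}^{n+1}V \otimes {\rm Sym}^{m-1}V = ({\rm Sym}^{n}V \otimes {\rm Sym}^{m-2}V) \oplus \bigoplus_{p=0}^{m-1} V(n+m-p,p,0,\ldots,0).
\]
Substituting the second identity into the first to eliminate the $\bigoplus_{p=0}^{m-1}$ tail yields the claimed equality after moving ${\rm Sym}^{n}V \otimes {\rm Sym}^{m-2}V$ to the left-hand side.

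There is essentially no obstacle here; the only thing to watch is checking that the hypotheses of Proposition \ref{prop2} are satisfied at each application (in particular $m-1 \geq 1$, which is precisely why the $m=1$ case must be handled separately), and that the indices in the telescoping sum line up correctly so that every $V(n+m-p,p,0,\ldots,0)$ for $0 \leq p \leq m-1$ cancels and only $V(n,m,0,\ldots,0)$ survives on the right-hand side.
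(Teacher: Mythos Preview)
Your proposal is correct and follows exactly the same approach as the paper: apply Proposition \ref{prop2} once for the $m=1$ case, and for $n \geq m \geq 2$ apply it to both ${\rm Sym}^nV \otimes {\rm Sym}^mV$ and ${\rm Sym}^{n+1}V \otimes {\rm Sym}^{m-1}V$, then combine to cancel the common tail $\bigoplus_{p=0}^{m-1} V(n+m-p,p,0,\ldots,0)$. Your write-up is in fact slightly more explicit than the paper's, spelling out the identifications ${\rm Sym}^0 V \cong \mathbb{C}$ and $V(n+1,0,\ldots,0) = {\rm Sym}^{n+1}V$ and checking the hypothesis $m-1 \geq 1$.
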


\begin{proof}
The first assertion is the special case of Proposition \ref{prop2} where $m=1$.
Using Proposition \ref{prop2}, when $n \geq m \geq 2$, 
\begin{align*}
& {\rm Sym}^nV \otimes {\rm Sym}^mV = ({\rm Sym}^{n-1}V \otimes {\rm Sym}^{m-1}V) \oplus \bigoplus_{p=0}^{m} V{(n+m-p,p,0, \ldots, 0)} 
\end{align*} 
and
\begin{align*}
& {\rm Sym}^{n+1}V \otimes {\rm Sym}^{m-1}V = ({\rm Sym}^{n}V \otimes {\rm Sym}^{m-2}V) \oplus \bigoplus_{p=0}^{m-1} V{(n+m-p,p, 0, \ldots, 0)} .
\end{align*}
Combining these equations yields the assertion.
\end{proof}

In the Grothendieck group of all representations of $\mathfrak{sp}(2r, \mathbb{C})$, for $V = V(1,0, \ldots, 0)$, where subtraction is equivalent to cancellation, we get

\begin{align*}
V(n, 0, \ldots 0) &= {\rm Sym}^nV & n \geq 0
\\
V(n,1, 0, \ldots, 0) &= {\rm Sym}^nV \otimes V - {\rm Sym}^{n+1}V - {\rm Sym}^{n-1}V & n \geq 1
\\
V(n,m, 0, \ldots, 0) &= {\rm Sym}^{n}V \otimes {\rm Sym}^{m}V + {\rm Sym}^{n}V \otimes {\rm Sym}^{m-2}V & n \geq m \geq 2
\\
& \hspace{0.4in} - {\rm Sym}^{n-1}V \otimes {\rm Sym}^{m-1}V - {\rm Sym}^{n+1}V \otimes {\rm Sym}^{m-1}V.
\end{align*}

This result can also be derived using character theory as in a proposition in Section 24.2 in \cite{fh}, which gives a formula for the character of an irreducible representation of a simplectic Lie algebra in terms of the characters of symmetric powers of the standard representation.

These identities also descend to the weight spaces of the weights in the respective representations because the weight space of a reducible representation can be thought of as the direct sum of the weight spaces for that same weight in its irreducible components.

\section{Weight multiplicities in bivariate irreducible representations}

As proven above, any bivariate irreducible representation of $\mathfrak{sp}(2r, \mathbb{C})$ with highest weight $(n,m, 0, \ldots, 0)$ can be written as a formal combination of tensor products of symmetric powers of the standard representation, and in fact all finite-dimensional representations of $\mathfrak{sp}(2r, \mathbb{C})$ can be written as formal combinations of tensor products of symmetric powers of the standard representation.  Therefore by determining the weight multiplicities in ${\rm Sym}^nV \otimes {\rm Sym}^mV$ for the standard representation $V$ and any $n$ and $m$, we will be able to deduce the weight multiplicities in $V(n,m, 0, \ldots, 0)$.  We will now use a combinatorial argument to produce a formula for the weight multiplicities in ${\rm Sym}^nV \otimes {\rm Sym}^mV$.

For this combinatorial argument, we will employ Proposition \ref{propsieve}, and we define $\begin{pmatrix} a \\ n \end{pmatrix}$ for any integer $a$ and any non-negative integer $n$ as follows,
\begin{align*}
\begin{pmatrix} a \\ n \end{pmatrix} &= \left\{ \begin{array}{lr} 0 & a < n \\ \frac{a!}{n!(a-n)!} & a \geq n. \end{array} \right. 
\end{align*}

\begin{proposition} \label{propsieve}
Given integer $n \geq 1$, non-negative integers $c_1, \ldots, c_n$, and any integer $k$, the number of solutions to the equation $x_1 + \ldots + x_n = m$ in non-negative integers, such that $0 \leq x_j \leq c_j$ for any $j$, is $f(c_1, \ldots, c_n)(m)$, and $$f(c_1, \ldots, c_n)(m) = \displaystyle\sum_{i=0}^n (-1)^i \displaystyle\sum_{s \in S_i} \begin{pmatrix} m - (s+i) + n-1 \\ n-1 \end{pmatrix},$$
where $S_i$ is the multiset consisting of all subsums of $c_1 + \ldots + c_n$ with length $i$.
\end{proposition}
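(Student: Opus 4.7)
The plan is to prove this by a direct inclusion-exclusion argument, which is the natural approach for counting solutions to a linear equation with upper bounds on the variables.

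First, I would recall (or prove with a quick stars-and-bars argument) that the number of non-negative integer solutions to $x_1 + \cdots + x_n = M$ with no upper bounds is $\binom{M+n-1}{n-1}$, using the convention from the statement that this binomial coefficient is $0$ when $M < 0$. Call this unconstrained count $N(M)$.

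Next, I would set up the inclusion-exclusion. For each $j \in \{1, \ldots, n\}$, let $A_j$ denote the set of non-negative integer solutions to $x_1 + \cdots + x_n = m$ violating the $j$-th constraint, i.e., with $x_j \geq c_j + 1$. The quantity $f(c_1, \ldots, c_n)(m)$ is then the cardinality of the complement of $A_1 \cup \cdots \cup A_n$ inside the set of all non-negative solutions, so
\begin{equation*}
f(c_1, \ldots, c_n)(m) = \sum_{T \subseteq \{1, \ldots, n\}} (-1)^{|T|} \left| \bigcap_{j \in T} A_j \right|.
\end{equation*}
For a fixed $T$, the substitution $y_j = x_j - (c_j + 1)$ for $j \in T$ and $y_j = x_j$ otherwise puts $\bigcap_{j \in T} A_j$ in bijection with the non-negative solutions of $y_1 + \cdots + y_n = m - \sum_{j \in T}(c_j + 1)$, which has cardinality $N\bigl(m - \sum_{j \in T} c_j - |T|\bigr)$. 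The convention on binomials ensures this vanishes automatically when the argument is negative, so no separate case analysis is needed.

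Finally, I would reindex the sum by $i = |T|$. For each $i$, as $T$ ranges over subsets of size $i$, the value $\sum_{j \in T} c_j$ ranges over precisely the multiset $S_i$ of length-$i$ subsums of $c_1 + \cdots + c_n$ (with multiplicity equal to the number of subsets producing that subsum). Grouping accordingly gives
\begin{equation*}
f(c_1, \ldots, c_n)(m) = \sum_{i=0}^n (-1)^i \sum_{s \in S_i} \binom{m - (s+i) + n-1}{n-1},
\end{equation*}
which is the claimed formula. The only real subtlety is bookkeeping the multiset $S_i$ correctly (so that equal subsums coming from distinct subsets are counted the appropriate number of times), and ensuring the extended binomial convention is used consistently so that oversized subsums contribute zero rather than nonsense. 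There is no genuine obstacle here; the argument is a textbook application of inclusion-exclusion, and I would present it concisely in the proof.
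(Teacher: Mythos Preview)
Your argument is correct and is essentially identical to the paper's: both define the ``bad'' sets $A_j=\{x_j>c_j\}$, compute $\bigl|\bigcap_{j\in T}A_j\bigr|$ via the shift $y_j=x_j-(c_j+1)$ and stars-and-bars, and then group the inclusion--exclusion sum by $|T|=i$ to obtain the formula. The paper presents this a bit more tersely, but there is no methodological difference.
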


Here $S_0 = \{ 0 \}$.
\begin{proof}
This can be proven by Inclusion-Exclusion or a sieve method.  For this method, consider some subset of $\{c_j\}$, without loss of generality assume it is $\{c_{1}, \ldots, c_{l}\}$.  The number of non-negative integer solutions to the equation $x_1 + \ldots + x_n = m$ such that $x_{j} > c_{j}$ for $1 \leq j \leq l$ is equal to the number of non-negative integer solutions of (2) from the following equations,
\begin{align}
(x_1 - (c_1 + 1)) + \ldots + (x_l - (c_l+1)) + x_{l+1} + \ldots + x_n  &= m - (c_1 + 1 + \ldots c_l + 1)\\
y_1 + \ldots + y_l + x_{l+1} + \ldots + x_n &= m - (c_1 + \ldots + c_l + l).
\end{align}
The number of non-negative integer solutions to (2) is $\begin{pmatrix} m - (s+l) + n-1 \\ n-1 \end{pmatrix}$, where $s = c_1 + \ldots + c_l$.
\end{proof}

To understand the weights and weight vectors of ${\rm Sym}^nV \otimes {\rm Sym}^mV$, we will first describe the weights and weight vectors for ${\rm Sym}^nV$.  Any standard basis vector of ${\rm Sym}^nV$ $(a_1, \ldots, a_{2r})$ has weight $(a_1 - a_{2r}, \ldots, a_{r}-a_{r+1}) = (x_1, \ldots, x_r)$, and 
\begin{align*}
a_1 &= x_1 + a_{2r} \\
&\vdots \\
a_r &= x_r + a_{r+1}.
\end{align*}
Also, any weight $(x_1, \ldots, x_r)$ of ${\rm Sym}^nV$ has some standard basis vector $(a_1, \ldots, a_{2r})$ satisfying these equations.  Given some weight $(x_1, \ldots, x_r)$ of ${\rm Sym}^nV$ with corresponding weight vector $(a_1, \ldots, a_{2r})$,
\begin{align*}
&a_1 + \ldots + a_{r} + a_{r+1} + \ldots + a_{2r} \\
&= (x_1 + a_{2r}) + \ldots + (x_r + a_{r+1}) + a_{r+1} + \ldots + a_{2r} \\
&= (x_1 + \ldots + x_r) + 2(a_{r+1} + \ldots + a_{2r}) \\
&= n.
\end{align*}
Therefore $n - (x_1 + \ldots + x_r) = 2k$ for some integer $k$.  By the symmetries of the weight diagrams of $\mathfrak{sp}(2r, \mathbb{C})$, we may assume $0 \leq x_i \leq n$ and $0 \leq k \leq \lfloor \frac{n}{2} \rfloor$.  For $(x_1, \ldots, x_r)$ in non-negative integers such that $n - (x_1 + \ldots + x_r) = 2k$, $0 \leq k \leq \lfloor \frac{n}{2} \rfloor$, any vector $(a_1, \ldots, a_{2r})$ such that $a_{r+1} + \ldots + a_{2r} = k$ and $a_i = x_i + a_{2r+1-i}$ for $i = 1, \ldots, r$ will have weight $(x_1, \ldots, x_r)$ and any vector with this weight must have this form, so the multiplicity of $(x_1, \ldots, x_r)$ is equal to the number of non-negative integer solutions to $a_{r+1} + \ldots + a_{2r} = k$, which is $\begin{pmatrix}
k + r-1
\\
r-1
\end{pmatrix}$.

Any solution in non-negative integers to $x_1 + \ldots + x_r = n-2k$ for each $0 \leq k \leq \lfloor \frac{n}{2} \rfloor$ will be a weight of ${\rm Sym}^nV$ because there exists a vector $(a_1, \ldots, a_{2r})$ of ${\rm Sym}^nV$ such that $a_{r+1} + \ldots + a_{2r} = k$ and $a_i = x_i + a_{2r+1-i}$ for $i = 1, \ldots, r$.  This proves that the weight diagram of ${\rm Sym}^nV$ will consist of weights for each $0 \leq k \leq \lfloor \frac{n}{2} \rfloor$, which are the integer solutions to $|x_1| + \ldots + |x_r| = n - 2k$.  (These are the "diamonds" with leading weights $(n-2k, 0, \ldots, 0)$.)  By the symmetries of the weight diagrams, every weight satisfying $|x_1| + \ldots + |x_r| = n - 2k$ for a particular $k$ will have multiplicity $\begin{pmatrix}
k + r-1
\\
r-1
\end{pmatrix}$.

Using the previous notation, the set $$\{ (a_1, \ldots, a_{2r}) \times (b_1, \ldots, b_{2r}) | a_i, b_j \in \mathbb{Z}_{\geq 0}, \displaystyle\sum_{i=1}^{2r} a_i = n, \displaystyle\sum_{j=1}^{2r} b_j = m \}$$ is a basis of weight vectors for the representation ${\rm Sym}^nV \otimes {\rm Sym}^mV$.  The weight of $(a_1, \ldots, a_{2r}) \times (b_1, \ldots, b_{2r})$ is 
\begin{align*}
&((a_1+b_1) - (a_{2r}+b_{2r}), \ldots, (a_r + b_r) - (a_{r+1}+b_{r+1})) 
\\
&\qquad = (c_1 - c_{2r}, \ldots, c_r - c_{r+1}),
\end{align*}
by defining $c_i = a_i + b_i$.  With a similar reasoning as above by viewing $(c_1, \ldots, c_{2r})$ as a vector of ${\rm Sym}^{n+m}V$, the weights of ${\rm Sym}^nV \otimes {\rm Sym}^mV$ will be all integer solutions to $|x_1| + \ldots + |x_{r}| = n + m - 2k$ for $0 \leq k \leq \lfloor \frac{n+m}{2} \rfloor$.  The question is now what is the multiplicity of a particular weight $(x_1, \ldots, x_{r})$ of ${\rm Sym}^nV \otimes {\rm Sym}^mV$.

\begin{theorem} \label{thm}
Given $(x_1, \ldots, x_r) \in \mathbb{Z}^r$ with $\displaystyle\sum_{j=1}^r |x_j| = n + m - 2k$ for some $0 \leq k \leq \lfloor \frac{n+m}{2} \rfloor$, the multiplicity of $(x_1, \ldots, x_r)$ in ${\rm Sym}^nV \otimes {\rm Sym}^mV$ is equal to
$$\displaystyle\sum_{\substack{c_j \in \mathbb{Z}_{\geq 0} \\ c_{r+1} + \ldots + c_{2r}=k}} \, \, \displaystyle\sum_{i=0}^{2r} (-1)^i \displaystyle\sum_{s \in S_i} \begin{pmatrix} m - (s+i) + 2r-1 \\ 2r-1 \end{pmatrix},$$
where $S_i$ is the multiset consisting of all subsums of $(c_1 + \ldots + c_{2r})$ with length $i$ and $c_j = |x_j| + c_{2r+1-j}$ for $1 \leq j \leq r$.
\end{theorem}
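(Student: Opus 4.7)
The plan is to convert the multiplicity into a two-layer count. First, I would parametrize the tuples $(c_1, \ldots, c_{2r})$ with $c_i := a_i + b_i$ that realize the prescribed weight; then, for each such $c$-tuple, I would count the number of splittings of $c$ into a pair $(a,b)$ with $\sum_i a_i = n$ and $\sum_i b_i = m$. The inner count should turn out to be exactly the sieve expression $f(c_1, \ldots, c_{2r})(m)$ from Proposition \ref{propsieve}, and the outer count will be indexed by the $c$-tuples coming from the paper's description of the weights of ${\rm Sym}^{n+m}V$.

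I would begin by invoking the Weyl symmetries catalogued in Section \ref{sp2r}: the multiplicity of $(x_1, \ldots, x_r)$ equals that of $(|x_1|, \ldots, |x_r|)$, so I may assume $x_j \geq 0$ and read $|x_j|$ as $x_j$. Under this assumption, the paragraph immediately preceding the theorem identifies the weight of the basis vector $(a_1, \ldots, a_{2r}) \times (b_1, \ldots, b_{2r})$ as $(c_1 - c_{2r}, \ldots, c_r - c_{r+1})$, so the weight constraint becomes $c_j = x_j + c_{2r+1-j}$ for $1 \leq j \leq r$. Combined with $\sum_i c_i = n + m$, these relations force $c_{r+1} + \ldots + c_{2r} = k$, so admissible $c$-tuples are parametrized by non-negative $(c_{r+1}, \ldots, c_{2r})$ summing to $k$---exactly the outer index set appearing in the theorem.

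Next, for each fixed admissible $c$-tuple I would count pairs $(a,b)$ of non-negative integer $2r$-tuples satisfying $a_i + b_i = c_i$, $\sum_i a_i = n$, and $\sum_i b_i = m$. Since $\sum_i c_i = n + m$, the constraint $\sum_i a_i = n$ is automatic given $\sum_i b_i = m$, so the pairs are in bijection with non-negative solutions $(b_1, \ldots, b_{2r})$ of $\sum_i b_i = m$ with $0 \leq b_i \leq c_i$. Applying Proposition \ref{propsieve} with its ``$n$'' set to $2r$ and its bounds set to $(c_1, \ldots, c_{2r})$ evaluates this count as
$$\sum_{i=0}^{2r} (-1)^i \sum_{s \in S_i} \binom{m - (s+i) + 2r - 1}{2r - 1}.$$
Summing this over admissible $c$-tuples, and using that the standard basis of ${\rm Sym}^n V \otimes {\rm Sym}^m V$ is a basis of weight vectors (so the multiplicity is literally the number of basis vectors in the weight space), yields the stated formula.

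The main obstacle is bookkeeping rather than a conceptual difficulty: I need to check carefully that the symmetry reduction, the forced relations $c_j = |x_j| + c_{2r+1-j}$, and the automatic satisfaction of $\sum_i a_i = n$ line up so that Proposition \ref{propsieve} applies cleanly with the bounds $(c_1, \ldots, c_{2r})$ and produces the inner sum verbatim. Once this framing is verified the theorem follows by direct substitution.
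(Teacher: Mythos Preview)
Your proposal is correct and follows essentially the same approach as the paper's own proof: reduce to non-negative $x_j$ by Weyl symmetry, parametrize the admissible $c$-tuples by the condition $c_{r+1}+\cdots+c_{2r}=k$ with $c_j=x_j+c_{2r+1-j}$, and for each $c$-tuple count the splittings into $(a,b)$ via Proposition~\ref{propsieve}. If anything, you spell out one step (that $\sum_i a_i=n$ is automatic once $\sum_i b_i=m$) more explicitly than the paper does.
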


\begin{proof}
Without loss of generality, assume $(x_1, \ldots, x_r) = (|x_1|, \ldots, |x_r|)$.  The $2r$-tuples in non-negative integers, $(c_1, \ldots, c_{2r})$, such that $(c_1 - c_{2r}, \ldots, c_r - c_{r+1}) = (x_1, \ldots, x_{r})$ is equal to the set of such $2r$-tuples with $c_{r+1} + \ldots + c_{2r} = k$ and $c_i = x_i + c_{2r+1-i}$ for $1 \leq i \leq r$.  For each of these $2r$-tuples, the number of ways to write $(a_1, \ldots, a_{2r}) \times (b_1, \ldots, b_{2r})$ in non-negative integers such that $\displaystyle\sum_{i=1}^{2r} a_i = n$, $\displaystyle\sum_{i=1}^{2r} b_i = m$, and $c_i = a_i + b_i$ is $f(c_1, \ldots, c_{2r})(m)$, as defined in Proposition \ref{propsieve}.  Therefore the multiplicity of $(x_1, \ldots, x_{r})$ in ${\rm Sym}^nV \otimes {\rm Sym}^mV$ is $$\displaystyle\sum_{\substack{c_j \in \mathbb{Z}_{\geq 0} \\ c_{r+1} + \ldots + c_{2r}=k}} f(c_1, \ldots, c_{2r})(m)$$ such that $c_i = x_i + c_{2r+1-i}$ for $1 \leq i \leq r$.
\end{proof}

The benefit of the formula in Theorem \ref{thm} is that the summations used are easily understood and the summands are simple binomial coefficients.  This leads to a more easily computable formula for weight multiplicities.  Now we will consider the weights of $V(n,m,0, \ldots, 0)$.  The weights of $V(n,m, 0, \ldots, 0)$ are contained within ${\rm Sym}^nV \otimes {\rm Sym}^mV$ and must be of the form $(x_1, \ldots, x_r)$ for integers $x_i$ such that $\displaystyle\sum_{i=1}^r |x_i| = n + m - 2k$ for some $0 \leq k \leq \lfloor \frac{n+m}{2} \rfloor$.

\begin{theorem}
Given $(x_1, \ldots, x_r) \in \mathbb{Z}^r$ with $\displaystyle\sum_{j=1}^r |x_j| = n + m - 2k$ for some $0 \leq k \leq \lfloor \frac{n+m}{2} \rfloor$, the multiplicity of $(x_1, \ldots, x_r)$ in $V(n,m, 0, \ldots, 0)$ is equal to
\begin{align*}
&\displaystyle\sum_{\substack{c_j \in \mathbb{Z}_{\geq 0} \\ c_{r+1} + \ldots + c_{2r}=k}} \, \, \displaystyle\sum_{i=0}^{2r} (-1)^i \displaystyle\sum_{s \in S_i} \begin{pmatrix} m - (s+i) + 2r-2 \\ 2r-2 \end{pmatrix}
\\
&-\displaystyle\sum_{\substack{c_j' \in \mathbb{Z}_{\geq 0} \\ c_{r+1}' + \ldots + c_{2r}'=k-1}} \, \, \displaystyle\sum_{i=0}^{2r} (-1)^i \displaystyle\sum_{s \in S_i'} \begin{pmatrix} m - 1 - (s+i) + 2r-2 \\ 2r-2 \end{pmatrix},
\end{align*}
where $S_i$ is the multiset consisting of all subsums of $(c_1 + \ldots + c_{2r})$ with length $i$ and $c_j = |x_j| + c_{2r+1-j}$ for $1 \leq j \leq r$ and $S_i'$ is the multiset consisting of all subsums of $(c_1' + \ldots + c_{2r}')$ with length $i$ and $c_j' = |x_j| + c_{2r+1-j}'$ for $1 \leq j \leq r$.
\end{theorem}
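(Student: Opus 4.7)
The plan is to combine the Grothendieck group identity from Corollary \ref{cor2} with Theorem \ref{thm} and simplify the resulting expression using Pascal's identity for binomial coefficients. From Corollary \ref{cor2} (taking $n \geq m \geq 2$), we have in the Grothendieck group
\begin{align*}
V(n,m,0,\ldots,0) &= ({\rm Sym}^{n}V \otimes {\rm Sym}^{m}V) + ({\rm Sym}^{n}V \otimes {\rm Sym}^{m-2}V) \\
&\quad - ({\rm Sym}^{n-1}V \otimes {\rm Sym}^{m-1}V) - ({\rm Sym}^{n+1}V \otimes {\rm Sym}^{m-1}V),
\end{align*}
and, as noted at the end of Section \ref{sp2r}, such identities descend to weight multiplicities. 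Thus the multiplicity of $(x_1, \ldots, x_r)$ in $V(n,m,0,\ldots,0)$ equals the analogous signed combination of four multiplicities, each given by Theorem \ref{thm}.

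The crucial structural observation is that the outer summation over $(c_1, \ldots, c_{2r})$ in Theorem \ref{thm} depends only on the index $k$ and on the $|x_j|$, and not on $n$ or $m$ individually. The two tensors ${\rm Sym}^{n}V \otimes {\rm Sym}^{m}V$ and ${\rm Sym}^{n+1}V \otimes {\rm Sym}^{m-1}V$ have the same total degree $n+m$, so both employ the same value of $k$ and range over the identical set of $c$-tuples. Similarly, ${\rm Sym}^{n}V \otimes {\rm Sym}^{m-2}V$ and ${\rm Sym}^{n-1}V \otimes {\rm Sym}^{m-1}V$ share total degree $n+m-2$, employing index $k-1$ and a common set of $c'$-tuples. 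The strategy is to pair the four terms by total degree, matching each positive contribution with the negative one that shares its outer summation.

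Within each pair the only remaining difference sits inside the binomial coefficient, where the argument $m$ in one term is replaced by $m-1$ in the other. Pascal's identity $\binom{a}{b} - \binom{a-1}{b} = \binom{a-1}{b-1}$ then collapses the difference; for the first pair,
\begin{align*}
\binom{m-(s+i)+2r-1}{2r-1} - \binom{(m-1)-(s+i)+2r-1}{2r-1} = \binom{m-(s+i)+2r-2}{2r-2},
\end{align*}
which, after summing over $c$-tuples and over the sieve indices, is precisely the first term of the claimed formula. An entirely analogous calculation for the second pair produces the second term, with the overall minus sign arising because there the positive contribution uses $m-2$ while the negative one uses $m-1$, reversing the direction in Pascal's identity.

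The main obstacle is purely bookkeeping: confirming that the pairing by total degree lines up correctly with the signs in Corollary \ref{cor2}, and handling the boundary cases $k = 0$ (where the $c'$-sum is vacuous and the second term vanishes) and $m = 1$ (which must be treated by the first clause of Corollary \ref{cor2} instead of the second). Neither poses any conceptual difficulty; once the pairing by degree is set up, the entire argument reduces to a single combinatorial identity applied twice.
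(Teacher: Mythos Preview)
Your proposal is correct and follows essentially the same approach as the paper: combine the Grothendieck-group identity from Corollary~\ref{cor2} with Theorem~\ref{thm}, pair the four tensor products by total degree so that the outer $c$-sums match (indices $k$ and $k-1$ respectively), and collapse each pair via Pascal's identity. The paper's proof is terser---it records only the degree equalities $n+m-2k = (n+1)+(m-1)-2k = n+(m-2)-2(k-1) = (n-1)+(m-1)-2(k-1)$ and invokes the binomial identity---while you spell out the pairing and the structural observation that the outer summation depends only on $k$ and the $|x_j|$; but the substance is the same.
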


\begin{proof}
As shown in Section \ref{sp2r}, 
\begin{align*}
V(n,m, 0, \ldots, 0) &= {\rm Sym}^{n}V \otimes {\rm Sym}^{m}V - {\rm Sym}^{n+1}V \otimes {\rm Sym}^{m-1}V
\\
& \hspace{0.4in} + {\rm Sym}^{n}V \otimes {\rm Sym}^{m-2}V - {\rm Sym}^{n-1}V \otimes {\rm Sym}^{m-1}V. 
\end{align*}
This identity is true for any integers $n \geq m \geq 0$ with the understanding that ${\rm Sym}^iV$ is equal to $0$ for negative values of $i$.  Combining this identity and Theorem \ref{thm} proves the assertion after applying the fact that
\begin{align*}
n+m-2k &= (n+1) + (m-1) -2k
\\
&= n + (m-2) - 2(k-1)
\\
&= (n-1) + (m-1) - 2(k-1),
\end{align*}
and the binomial coefficient identity,
$\begin{pmatrix} n+1 \\ k \end{pmatrix} 
- \begin{pmatrix} n \\ k \end{pmatrix} 
= \begin{pmatrix} n \\ k-1 \end{pmatrix}$.
\end{proof}

\end{document}